\newtheorem{assumptionletter}{Assumption}
\newcommand{\bq}{\begin{equation}}
\newcommand{\eq}{\end{equation}}
\renewcommand{\ldots}{\dotsc}
\def\T{{\mathcal T}}
\def\bbb{{\bf b}}
\def\bn{{\bf n}}
\def\bq{{\bf q}}
\def\3bar{{|\hspace{-.02in}|\hspace{-.02in}|}}
\title{Maximum Principles for $P1$-Conforming Finite Element Approximations of
Quasi-Linear Second Order Elliptic Equations}
\author{Junping Wang\thanks{Division of Mathematical Sciences, National
Science Foundation, Arlington, VA 22230 (jwang@\break nsf.gov). The
research of Wang was supported by the NSF IR/D program, while
working at the Foundation. However, any opinion, finding, and
conclusions or recommendations expressed in this material are those
of the author and do not necessarily reflect the views of the
National Science Foundation.} \and Ran Zhang \thanks{Department of
Mathematics, Jilin University, Changchun, China
(zhangran@mail.jlu.edu.cn). The research of Zhang was supported in
part by China Natural National Science Foundation.}}
\begin{document}
\maketitle

\begin{abstract}
This paper derives some discrete maximum principles for $P1$-conforming
finite element approximations for quasi-linear second order elliptic
equations. The results are extensions of the classical maximum
principles in the theory of partial differential equations to finite
element methods. The mathematical tools are based on the
variational approach that was commonly used in the classical PDE theory.
The discrete maximum principles are established by assuming a property
on the discrete variational form that is of global nature. In particular,
the assumption on the variational form is verified when the finite element
partition satisfies some angle conditions. For the general quasi-linear
elliptic equation, these angle conditions indicate that each triangle or
tetrahedron needs to be $\mathcal{O}(h^\alpha)$-acute in
the sense that each angle $\alpha_{ij}$ (for triangle) or interior
dihedral angle $\alpha_{ij}$ (for tetrahedron) must satisfy
$\alpha_{ij}\le \pi/2-\gamma h^\alpha$ for some $\alpha\ge 0$ and
$\gamma>0$. For the Poisson problem where the differential operator
is given by Laplacian, the angle requirement is the same as the
existing ones: either all the triangles are non-obtuse or each
interior edge is non-negative. It should be pointed out that the
analytical tools used in this paper are based on the powerful De
Giorgi's iterative method that has played important roles in the
theory of partial differential equations. The mathematical analysis
itself is of independent interest in the finite element analysis.
\end{abstract}

\begin{keywords}
finite element methods, maximum principles, discrete maximum
principles, quasi-linear elliptic equations
\end{keywords}

\begin{AMS}
Primary 65N30; Secondary 65N50
\end{AMS}
\pagestyle{myheadings}

\section{Introduction}

In this paper we are concerned with maximum principles for $P1$
conforming finite element solutions for quasi-linear second order
elliptic equations. The continuous problem seeks an unknown function
with appropriate regularity such that
\begin{equation}\label{FEME:eq1}
-\nabla\cdot(a(x,u,\nabla u)\nabla u) + \mathbf{b}(x,u, \nabla
u)\cdot\nabla u + c(x,u) u=f(x), \quad \mbox{in}\ \Omega,
\end{equation}
where $\Omega$ is a polygonal or polyhedral domain in
$\mathbb{R}^d\; (d=2,3)$, $a=a(x,u,\nabla u)$ is a scalar function,
$\bbb=(b_i(x,u,\nabla u))_{d\times 1}$ is a vector-valued function,
$c=c(x,u)$ is a scalar function on $\Omega$, and $\nabla u$ denotes
the gradient of the function $u=u(x)$. We shall assume that the
differential operator is strictly elliptic in $\Omega$; that is,
there exists a positive number $\lambda>0$ such that
\begin{equation}\label{ellipticity}
a(x,\eta,p)\ge \lambda,\quad\forall x\in \Omega, \eta\in \mathbb{R},
p\in \mathbb{R}^d.
\end{equation}
We also assume that the differential operator has bounded
coefficients; that is for some constants $\Lambda$ and $\nu\ge 0$ we
have
\begin{equation}\label{boundedness}
 |a(x,\eta,p)|\leq \Lambda, \quad
 \lambda^{-2}\sum |b_i(x,\eta,p)|^2 + \lambda^{-2} |c(x,\eta)|^2 \leq \nu^2,
 \end{equation}
for all $x\in \Omega, \eta\in \mathbb{R},$ and $p\in \mathbb{R}^d$.

Introduce the following form
\begin{equation}\label{bilinear-form}
\mathfrak{Q}(w;u,v):=\int_\Omega \left\{a\nabla u\cdot\nabla v +
\bbb\cdot(\nabla u) v + cuv\right\}dx,
\end{equation}
where $a=a(x,w,\nabla w)$, $\bbb=\bbb(x,w,\nabla w)$, and
$c=c(x,w)$. Let the function $f$ in (\ref{FEME:eq1}) be locally
integrable in $\Omega$. Then a weakly differentiable function $u$ is
called a weak solution of (\ref{FEME:eq1}) in $\Omega$ if
\begin{equation}\label{weak-solution}
\mathfrak{Q}(u;u,v)=F(v),\qquad \forall v\in C_0^1(\Omega),
\end{equation}
where $F(v)\equiv \int_\Omega f v dx$. For simplicity, we shall
consider solutions of (\ref{FEME:eq1}) with a non-homogeneous
Dirichlet boundary condition
\begin{equation}\label{bc-diri}
u=g, \quad \mbox{on}\  \partial\Omega,
\end{equation}
where $g\in H^{\frac12}(\partial\Omega)$ is a function defined on
the boundary of $\Omega$. Here $H^1(\Omega)$ is the Sobolev space
consisting of functions which, together with its gradient, is square
square integrable over $\Omega$. $H^{\frac12}(\partial\Omega)$ is the
trace of $H^1(\Omega)$ on the boundary of $\Omega$. The
corresponding weak form seeks $u\in H^1(\Omega)$ such that $u=g$ on
$\partial\Omega$ and
\begin{eqnarray}\label{weakform}
\mathfrak{Q}(u;u,v)=F(v),\qquad \forall v\in H_0^1(\Omega).
\end{eqnarray}

The usual maximum principle for the solution of (\ref{weakform})
(e.g., see \cite{gilbarg}) asserts that if $c(x,\eta)\ge 0$ and
$f(x)\le 0$ for all $x\in \Omega$ and $\eta\in \mathbb{R}$, then
\begin{equation}\label{mp-v1}
\sup_{x\in\Omega} u(x) \leq \sup_{x\in\partial\Omega} g_+(x),
\end{equation}
where $g_+(x)=\max(g(x),0)$ is the non-negative part of the boundary
data. Moreover, if $c=0$, then one has
\begin{equation}\label{mp-v2}
\sup_{x\in\Omega} u(x) \leq \sup_{x\in\partial\Omega} g(x).
\end{equation}
For general non-homogeneous equation (\ref{FEME:eq1}), by using the
powerful De Giorgi's iterative technique \cite{degiorgi} one can
derive the following maximum principle (see \cite{wyw} for details).

\begin{theorem}\label{pde-mp}
Let $u\in H^1(\Omega)$ be a weak solution of (\ref{FEME:eq1}) and
(\ref{bc-diri}) arising from the formula (\ref{weakform}). Let $p>2$
be any real number such that $p<+\infty$ for $d=2$ and $p<\frac{2d}{d-2}$ for $d>2$.
Assume that $f\in L^{\frac{pr}{(p-1)(r-1)}}(\Omega)$ with a real number $r$, $1\leq r < p-1$.
Assume that the coefficient functions and the solution satisfy $c-\frac12 \nabla\cdot \bbb \ge 0$
for any $x\in \Omega$. Then, there exists a constant $C=C(\Omega)$ such that
\begin{equation}\label{dmp-case1-intro-new}
\sup_{x\in \Omega} u(x) \le k_0+
C\|f\|_{L^{\frac{pr}{(p-1)(r-1)}}},
\end{equation}
where
\begin{equation}\nonumber
k_0=\left\{
\begin{array}{ll}
\displaystyle \sup_{x\in\partial\Omega} g_+(x), & \mbox{ if\  $c\ge 0$}, \\
\displaystyle \sup_{x\in\partial\Omega} g(x), & \mbox{ if \ $c\equiv 0$}.
\end{array}\right.
\end{equation}
Moreover, the dependence of $C=C(\Omega)$ is given by
$$
C(\Omega)= C 2^{\frac{p-1}{p-1-r}} |\Omega|^{\frac{p-1-r}{pr}}.
$$
\end{theorem}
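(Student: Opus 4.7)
The plan is to adapt De Giorgi's truncation method to the quasi-linear weak formulation (\ref{weakform}), iterating a decay estimate on the super-level set measures $\phi(k):=|A(k)|$ with $A(k):=\{x\in\Omega:\,u(x)>k\}$. First, fix $k\ge k_0$ and take $v=(u-k)_+$ as a test function: since $u-g\in H_0^1(\Omega)$ with $g\le k_0\le k$ on $\partial\Omega$, the truncation $v$ lies in $H_0^1(\Omega)$ and is admissible. On $A(k)$ one has $\nabla v=\nabla u$ and $u=v+k$, reducing the task to lower bounding $\mathfrak{Q}(u;u,v)$ by a coercive quantity and upper bounding $F(v)=\int_{A(k)} f\,v$ by data.

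For the coercivity, ellipticity (\ref{ellipticity}) gives $\int a\,\nabla u\cdot\nabla v\ge \lambda\int_{A(k)}|\nabla v|^2$. The convection and reaction terms are combined by writing $\bbb\cdot\nabla u\,v=\tfrac12\bbb\cdot\nabla(v^2)$ and $cuv=cv^2+kcv$ on $A(k)$, integrating by parts in the first, and invoking the sign hypothesis $c-\tfrac12\nabla\cdot\bbb\ge 0$. This contributes a non-negative term in $v^2$ plus a residual $k\int_{A(k)}c\,v$, which is also non-negative under the dichotomy defining $k_0$ (either $c\ge 0$ with $k_0=\sup g_+\ge 0$ forcing $k\ge 0$, or $c\equiv 0$ in which case the residual vanishes). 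One therefore arrives at the energy inequality
\[
\lambda\int_{A(k)}|\nabla v|^2\,dx\le \int_{A(k)} f\,v\,dx.
\]

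The iteration step is then classical. Applying H\"older with exponents $s=\tfrac{pr}{(p-1)(r-1)}$, $p$, and the remainder $1-\tfrac1s-\tfrac1p=\tfrac{p-1}{pr}$, together with the Sobolev embedding $H_0^1(\Omega)\hookrightarrow L^p(\Omega)$ available in the stated range of $p$, produces
\[
\|v\|_{L^p}\le C\lambda^{-1}\|f\|_{L^s}\,|A(k)|^{(p-1)/(pr)}.
\]
For $h>k$ the pointwise bound $v\ge h-k$ on $A(h)$ converts this into the recursion
\[
\phi(h)\le \frac{(C\|f\|_{L^s})^p}{(h-k)^p}\,\phi(k)^{(p-1)/r}.
\]
Since the exponent $(p-1)/r>1$ by the assumption $r<p-1$, the one-variable iteration lemma of Stampacchia yields $\phi(k_0+d)=0$ with $d=2^{(p-1)/(p-1-r)}\,C\,\|f\|_{L^s}\,\phi(k_0)^{(p-1-r)/(pr)}$. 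Bounding $\phi(k_0)\le|\Omega|$ produces the advertised constant $C(\Omega)=C\cdot 2^{(p-1)/(p-1-r)}|\Omega|^{(p-1-r)/(pr)}$ and the inequality (\ref{dmp-case1-intro-new}).

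The main obstacle I anticipate is the coercivity step, specifically the interpretation of the structural assumption $c-\tfrac12\nabla\cdot\bbb\ge 0$. Because $\bbb=\bbb(x,u,\nabla u)$ is only known to be bounded (see (\ref{boundedness})), its divergence exists merely distributionally, and the manipulation $\int\bbb\cdot\nabla(v^2)=-\int(\nabla\cdot\bbb)v^2$ must be read as a hypothesis on the linear functional $w\mapsto \int(cw-\tfrac12\bbb\cdot\nabla w)$ acting on non-negative $w\in W_0^{1,1}(\Omega)$, rather than as a literal identity; a regularization of $\bbb$ followed by a passage to the limit may be needed to make this rigorous. Once the sign of the residual $k\int c\,v$ is settled by the case analysis on $c$, the rest is algebraic bookkeeping to match the H\"older triple with the exponent $(p-1)/(pr)$ and to propagate the $|\Omega|$-dependence through the Stampacchia iteration.
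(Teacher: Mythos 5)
Your proposal is correct and follows essentially the same route the paper takes: the paper defers the proof of this continuous theorem to its reference \cite{wyw}, but the argument it gives in full for the discrete analogue (Theorem \ref{thm5.2}) is exactly your De Giorgi scheme --- test with $(u-k)_+$, absorb the convection term via $(\bbb\cdot\nabla\varphi,\varphi)=-\tfrac12((\nabla\cdot\bbb)\varphi,\varphi)$ and the sign condition, apply H\"older/Sobolev to get the level-set recursion with exponent $\tfrac{p-1}{r}>1$, and invoke Lemma \ref{FEME:lem1}, yielding the same constant $C\,2^{\frac{p-1}{p-1-r}}|\Omega|^{\frac{p-1-r}{pr}}$. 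Your remark about the distributional reading of $\nabla\cdot\bbb$ is a fair caveat that the paper also leaves implicit.
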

Here and in what follows of this paper, $C$ denotes a generic
dimensionless constant.

\bigskip

The goal of this paper is to establish an analogy of the maximum
principles (\ref{mp-v1}), (\ref{mp-v2}), and (\ref{dmp-case1-intro-new})
for $P1$-conforming finite element approximations of (\ref{weakform}).
We will establish similar maximum principles for such finite element
approximations with an assumption on the form $\mathfrak{Q}(w;u,v)$ (see (\ref{hypothesis})
for details) that can be verified through some geometric conditions imposed on the
corresponding finite element partition. As an example, we
shall explore some geometric conditions that apply to the angles of
each element, as was commonly done in existing results on discrete
maximum principles (DMP) (see for example, \cite{ciarlet-1} and \cite{strang}).
For the general quasi-linear elliptic equation (\ref{FEME:eq1}), the
triangles or tetrahedron need to be $\mathcal{O}(h^\alpha)$-acute in
the sense that each angle (for triangular case) or interior dihedral
angle (for tetrahedral case) must satisfy $\alpha_{ij}\le
\pi/2-\gamma h^\alpha$ for some $\alpha\ge 0$ and $\gamma>0$. For
the Poisson problem where the differential operator is given by
Laplacian, the angle requirement is the same as the existing ones:
either all the triangles are non-obtuse or each interior edge is
non-negative as defined in \cite{scott}.

\medskip

The research on discrete maximum principles for finite element
solutions can be dated back to the seventies of the last century.
In \cite{ciarlet-1}, a linear second order elliptic equation was
considered, and a discrete maximum principle was established for
continuous piecewise linear finite element approximations if all
angles in the finite element triangulation are not greater than
$\pi/2$ (the so-called non-obtuse condition). In \cite{strang}, it
was noted (see page 78) that the discrete maximum principle holds
true for continuous piecewise linear finite element approximations
for the Poisson problem under the following weaker condition: for
every pair $(\alpha_1; \alpha_2)$ of angles opposite a common edge
of some given pair of adjacent triangles of the triangulation one
has $\alpha_1+\alpha_2\le \pi$. In \cite{santos}, it was shown that
the discrete maximum principle may hold true in some cases if both
angles in such a pair are greater than $\pi/2$. In \cite{hall}, the
case of rectangular meshes and bilinear finite
element approximations was considered for second order linear
elliptic equations with Dirichlet boundary conditions. The notion of
non-narrow rectangular element was introduced as a sufficient
geometric condition for a discrete maximum principle to hold. In
\cite{linqun}, a 3D nonlinear elliptic problem with Dirichlet
boundary condition was considered and the effect of quadrature rules
was taken into account. A corresponding discrete maximum principle
was derived under the condition of non-obtuseness for the underlying
tetrahedral meshes. It was further shown that the DMP may also hold
true for continuous piecewise linear finite element approximations
for elliptic problems under various weaker conditions on the
simplicial meshes used.  The acuteness assumption has been weakened
in \cite{korotov} and \cite{santos}. In particular, in certain
situations, obtuse interior angles in the simplices of the meshes
are acceptable. In \cite{karatson1}, quasi-linear elliptic equation
of second order in divergent form was considered, and corresponding
DMPs were derived for mixed (Robin-type) boundary conditions.
In \cite{schatz}, a weaker discrete maximum principle is shown to
hold under quite general conditions on the mesh (quasi-uniformity)
and arbitrary degree polynomials, namely
$$
\|u_h\|_{\infty,\Omega}\leq C \|u_h\|_{\infty,\partial\Omega},
$$
where $C>0$ is independent of the meshsize $h$. In \cite{scott},
positivity for discrete Green's function was investigated for
Poisson equations. The authors addressed the question of whether the
discrete Green's function is positive for triangular meshes allowing
sufficiently good approximation of $H^1$ functions. They gave
examples which show that in general the answer is negative. The
authors also extended the number of cases where it is known to be
positive.

\medskip

The contributions of this paper are as follows: (1) the DMP result
with general non-homogeneous quasi-linear elliptic PDE
(\ref{FEME:eq1}) is new (see Theorem \ref{thm5.2}); (2) the DMP
result, as summarized in Theorem \ref{thm5.3}, is new with the
inclusion of the first order term $\bbb(x,u,\nabla u)\cdot\nabla u$
in the PDE; and (3) the mathematical tools for deriving DMPs are new
in the finite element analysis. Our analytical tools are based on a
variational approach which are extensions of similar tools that were
used to derive maximum principles in pure theory of partial
differential equations. We envision that the new analytical tool
shall have applications to a much wider class of problems than the
existing approach based on the inversion of $M$-matrices in the DMP
analysis. In particular, we shall report some DMPs for
$P1$-nonconforming finite elements and mixed finite element
approximations for (\ref{FEME:eq1}) and (\ref{bc-diri}) in a
forthcoming paper.
\medskip

The paper is organized as follows. In Section 2, we shall review the
finite element method for (\ref{FEME:eq1}) and (\ref{bc-diri}) based
on the form (\ref{weakform-fem}). In Section 3, we shall derive two discrete
maximum principles (DMP) for $P1$-conforming finite element approximations
under an assumption to be verified in forthcoming sections. In Section 4,
we discuss the relation of shape functions with angles and interior
dihedral angles for each element (triangular or tetrahedral). Finally
in Section 5, we shall verify the assumption under which the DMPs were derived
in earlier sections by requiring some angle conditions for the underlying
finite element partition.

\section{Galerkin Finite Element Methods}\label{section2}

In the standard Galerkin method (e.g., see \cite{ci, sue}), the
trial space $H^1(\Omega)$ and the test space $H_0^1(\Omega)$ in
(\ref{weakform}) are each replaced by properly defined subspaces of
finite dimensions. The resulting solution in the subspace/subset is
called a Galerkin approximation. Galerkin finite element methods are
particular examples of the Galerkin method in which the approximating functions
(both trial and test) are given as continuous piecewise polynomials over a
prescribed finite element partition for the domain, denoted by
${\cal T}_h$.
\medskip

We consider only Galerkin finite element approximations arising from
continuous piecewise linear finite element functions -- known as
$P1$ conforming finite element methods. To this end, let ${\cal
T}_h$ be a finite element partition of the domain $\Omega$
consisting of triangles ($d=2$) or tetrahedra ($d=3$). Assume that
the partition ${\cal T}_h$ is shape regular so that the routine
inverse inequality in the finite element analysis holds true (see
\cite{ci}). Denote by $h=\max_{T\in {\cal T}_h} h_T$ the meshsize of
${\cal T}_h$ with $h_T$ being the diameter of $T$. For each $T\in
{\cal T}_h$, denote by $P_j(T)$ the set of polynomials on $T$ with
degree no more than $j$. The $P1$ conforming finite element space is
given by
\begin{equation}\label{p1conformingfem}
S_h:=\left\{ v:\ v\in H^1(\Omega),\; v|_{T}\in P_1(T), \forall T\in {\cal T}_h \right\}.
\end{equation}
Denote by $S_h^0$ the subspace of $S_h$ with
vanishing boundary values on $\partial\Omega$; i.e.,
\begin{equation}\label{p1conformingzerobdry}
S_h^0:=\left\{ v \in S_h,\; {v}|_{\partial\Omega}=0 \right\}.
\end{equation}
The corresponding Galerkin method seeks $u_h\in S_h$ such that
$u_h=I_hg$ on $\partial \Omega$ and
\begin{eqnarray}\label{weakform-fem}
\mathfrak{Q}(u_h;u_h,v)=F(v),\qquad \forall v\in S_h^0,
\end{eqnarray}
where $I_hg$ is an appropriately defined interpolation of the
Dirichlet boundary condition (\ref{bc-diri}) into continuous
piecewise linear functions on $\partial\Omega$. For example, the
standard nodal point interpolation would be acceptable if the
boundary data $u=g$ is sufficiently regular.

\medskip

Let $v\in S_h$ be any finite element function and $k$ be any real number.
We shall decompose $v-k$ into two components
\begin{equation}\label{decomposition-v}
v-k=(v-k)_+ + (v-k)_-,
\end{equation}
where $(v-k)_+$ is a finite element function in $S_h$ taken as the non-negative
part of $v-k$ at the nodal points of the finite element partition ${\cal T}_h$;
i.e., $(v-k)_+$ is defined as a function in $S_h$ such that at each nodal point $A$,
$$
(v-k)_+(A) = \left\{
\begin{array}{ll}
v(A)-k,&\qquad \mbox{if } v(A)\ge k,\\
0,&\qquad\mbox{otherwise}.
\end{array}
\right.
$$
Likewise, the function $(v-k)_-:=(v-k)-(v-k)_+$ is the non-positive
part of $v-k$ at the nodal points of ${\cal T}_h$.
\bigskip

\begin{lemma}\label{technical-lemma1}
Let $v\in S_h$ be any finite element function. Let $k$ be any real
number such that $k\ge 0$ if $c=c(x,\tau)\ge 0$ and $k$ arbitrary if
$c\equiv 0$. Then, we have
\begin{eqnarray}\label{technical.01}
\mathfrak{Q}(v;v,(v-k)_+)&\ge& \mathfrak{Q}(v;(v-k)_+,(v-k)_+) + \mathfrak{Q}(v;(v-k)_-,(v-k)_+).
\end{eqnarray}
\end{lemma}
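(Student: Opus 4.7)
The plan is to exploit linearity of $\mathfrak{Q}(w;\cdot,\cdot)$ in its second argument and isolate the contribution that comes from the constant $k$. Since $S_h$ consists of continuous piecewise linear functions, the constant $k$ itself lies in $S_h$, and at every nodal point the identity $v - k = (v-k)_+ + (v-k)_-$ holds by the very definition of the positive/negative part; because two $P_1$ functions agreeing at all nodes coincide, this identity holds as an equality of functions in $S_h$. Adding $k$ to both sides gives $v = (v-k)_+ + (v-k)_- + k$, and linearity then yields
\begin{equation}\nonumber
\mathfrak{Q}(v;v,(v-k)_+) = \mathfrak{Q}(v;(v-k)_+,(v-k)_+) + \mathfrak{Q}(v;(v-k)_-,(v-k)_+) + \mathfrak{Q}(v;k,(v-k)_+).
\end{equation}

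The lemma will therefore follow once I verify that $\mathfrak{Q}(v;k,(v-k)_+) \ge 0$. Because $k$ is a constant, $\nabla k \equiv 0$, so the $a$- and $\bbb$-terms in the definition (\ref{bilinear-form}) drop out, leaving
\begin{equation}\nonumber
\mathfrak{Q}(v;k,(v-k)_+) = \int_\Omega c\,k\,(v-k)_+\,dx.
\end{equation}
Here $(v-k)_+$ is pointwise non-negative throughout $\Omega$: its nodal values are non-negative by construction, and a $P_1$ function is a convex combination of its nodal values on each element. It then suffices to check the sign of $c\cdot k$ under the two hypotheses of the lemma. In the case $c=c(x,\tau)\ge 0$ the assumption $k\ge 0$ gives $ck(v-k)_+\ge 0$; in the case $c\equiv 0$ the entire integral vanishes. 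Either way the remainder term is non-negative, which establishes (\ref{technical.01}).

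I do not expect any substantive obstacle; the only place one has to be careful is the justification that the nodal decomposition $v-k = (v-k)_+ + (v-k)_-$ really is an identity of functions in $S_h$ (not merely at nodes), and the observation that constants are admissible as the second argument of $\mathfrak{Q}$ so that the linearity splitting above is legitimate. Both of these are immediate from the fact that we are dealing with $P_1$ finite elements on the partition $\T_h$.
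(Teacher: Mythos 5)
Your proof is correct and follows essentially the same route as the paper: split off the constant via bilinearity, reduce to the sign of $\mathfrak{Q}(v;k,(v-k)_+)=k(c,(v-k)_+)$, and handle the two cases $c\ge 0,\ k\ge 0$ and $c\equiv 0$. The extra care you take in justifying that $(v-k)_+$ is pointwise non-negative and that the nodal decomposition holds as an identity in $S_h$ is a welcome refinement of details the paper leaves implicit.
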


\begin{proof}
Observe that $\mathfrak{Q}(w;u,v)$ is bilinear in terms of $u$ and $v$. Thus,
\begin{eqnarray*}\nonumber
\mathfrak{Q}(v;v, (v-k)_+) &=& \mathfrak{Q}(v;v-k, (v-k)_+) + \mathfrak{Q}(v;k, (v-k)_+)\\
&=& \mathfrak{Q}(v;v-k, (v-k)_+) + k(c, (v-k)_+).
\end{eqnarray*}
Here we have used the fact that $\mathfrak{Q}(v;k, (v-k)_+) =k(c, (v-k)_+)$.
If $c\ge 0$ and $k\ge 0$, then we obtain
\begin{eqnarray}\label{hello.88}
\mathfrak{Q}(v;v, (v-k)_+) &\ge& \mathfrak{Q}(v;v-k, (v-k)_+).
\end{eqnarray}
In the case of $c\equiv 0$, (\ref{hello.88}) clearly holds true
for any real number $k$ and the inequality can be replaced by
equality. It follows from (\ref{hello.88}) and the decomposition (\ref{decomposition-v})
that (\ref{technical.01}) holds true. This completes the proof of the lemma.
\end{proof}

\bigskip
For convenience of analysis, we shall need a discrete equivalence for the usual $L^p$ norm
$\|v\|_{L^p}$ in the finite element space $S_h$. To this end, let $v$ be any finite
element function in $S_h$. Denote by $\{v\}$ the vector
$$
\{v\}=(v(A_1),\ldots, v(A_j),\ldots, v(A_N)),
$$
where $\{A_j\}_{j=1,\cdots, N}$ is the set of nodal points of the
finite element partition $\T_h$. Denote by $\Omega_j$ the macro element associated with the
nodal point $A_j$ (i.e., $\Omega_j$ is
the union of elements $T_{ij}$ that share $A_j$ as a vertex point).
It is not hard to show that there exist constants $C_0$ and $C_1$
such that
\begin{equation}\label{normequivalence}
C_0\sum_{j=1}^N |v(A_j)|^p |\Omega_j| \leq \|v\|_{L^p}^p \leq
C_1\sum_{j=1}^N |v(A_j)|^p|\Omega_j|.
\end{equation}
For completeness, let us outline a proof for the left inequality.
For any $x\in \Omega_j$, we have
$$
v(A_j)=v(x)+(A_j-x)\cdot\nabla v.
$$
Thus,
$$
|v(A_j)|^p\leq 2^p \left(|v(x)|^p + \|(A_j-x)\|^p\ \|\nabla
v\|^p\right).
$$
Integrating over $\Omega_j$ and then using the standard inverse
inequality for the finite element function $v$ yields
$$
|v(A_j)|^p |\Omega_j| \leq C\int_{\Omega_j} |v(x)|^p dx.
$$
By summing the above over all the nodal points $A_j$ we obtain
$$
\sum_{j=1}^N |v(A_j)|^p |\Omega_j| \leq C \int_{\Omega} |v|^p dx,
$$
where we have used the fact that $\Omega_j$ overlaps with only a
fixed number of other macro-elements.

\section{Maximum Principles for $P1$ Conforming
Approximations}\label{section-dmp} The goal of this section is to
establish a maximum principle for $P1$ conforming finite element
approximations $u_h$ arising from the formula (\ref{weakform-fem}).
This shall be accomplished by using a technique known as the De
Giorgi's iterative method (\cite{degiorgi}) originally developed for
second order elliptic equations associated with maximum principles.
In its essence, the De Giorgi's iterative technique is to estimate
the set
$$
G(k):=\{x: \ x\in\Omega, u(x)\ge k\}
$$
by showing that the measure of the set $G(k)$ is zero for some
values of $k$. The center piece of the De Giorgi's iterative method
is the following technical lemma which can be proved through an
iterative argument, and hence the name of the method.

\medskip
\begin{lemma}{(\cite{degiorgi})}\label{FEME:lem1}
Let $\phi(t)$ be a non-negative monotone function on $[k_0,
+\infty)$. Assume that $\phi$ is non-increasing and satisfies
\begin{eqnarray}\label{FEME:eq1.new}
\phi(s)\le \left(\frac{M}{s-k}\right)^\alpha [\phi(k)]^\beta,
\quad \forall\ s>k\ge k_0,
\end{eqnarray}
where $\alpha>0, \beta>1$ are two fixed parameters and $M>0$ is a constant.
Then, there exists a number $\rho$ such that
$$
\phi(k_0+\rho)=0.
$$
Moreover, one has the following estimate
$$
\rho \ge M[\phi(k_0)]^{(\beta-1)/\alpha}2^{\beta/(\beta-1)}.
$$
\end{lemma}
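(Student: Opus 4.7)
The plan is to use the standard iterative argument that gives this lemma its name. The idea is to construct an increasing sequence $\{k_n\}$ in $[k_0, k_0+\rho)$ that converges to $k_0+\rho$, and show by induction that $\phi(k_n)$ decays geometrically to $0$; since $\phi$ is monotone non-increasing, this forces $\phi(k_0+\rho)=0$.

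Concretely, I would set
\begin{equation*}
k_n = k_0 + \rho\bigl(1 - 2^{-n}\bigr), \qquad n=0,1,2,\ldots,
\end{equation*}
so that $k_0 = k_0$, $k_n \nearrow k_0+\rho$, and $k_{n+1} - k_n = \rho\, 2^{-(n+1)}$. Applying the hypothesis (\ref{FEME:eq1.new}) with $s=k_{n+1}$ and $k=k_n$ yields
\begin{equation*}
\phi(k_{n+1}) \le \left(\frac{M\, 2^{n+1}}{\rho}\right)^{\!\alpha}\bigl[\phi(k_n)\bigr]^\beta.
\end{equation*}
Then I would look for a decay ansatz of the form $\phi(k_n) \le \phi(k_0)\, q^{-n}$ with some $q>1$ to be chosen. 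Substituting the ansatz into the recursion and demanding that the inductive step close gives the two matching conditions: the $n$-dependent factors must cancel (which pins down $q^{\beta-1} = 2^\alpha$, i.e.\ $q = 2^{\alpha/(\beta-1)}$), and the remaining $n$-independent factors must satisfy an inequality that is equivalent to
\begin{equation*}
\rho \ge 2M\, [\phi(k_0)]^{(\beta-1)/\alpha}\, 2^{1/(\beta-1)} = M\,[\phi(k_0)]^{(\beta-1)/\alpha}\, 2^{\beta/(\beta-1)}.
\end{equation*}
This is precisely the lower bound on $\rho$ claimed in the statement, and it is the algebraic heart of the argument. The induction base case $\phi(k_0) \le \phi(k_0)$ is trivial.

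Once the induction is complete, I have $\phi(k_n) \to 0$ as $n\to\infty$. Since $\phi$ is non-increasing on $[k_0,\infty)$ and $k_n < k_0+\rho$ for each $n$ while $k_n \to k_0+\rho$, for every $n$ we have $\phi(k_0+\rho) \le \phi(k_n)$ (using monotonicity applied between $k_n$ and $k_0+\rho$, which needs only that $\phi$ is non-increasing, not continuous). Passing to the limit gives $\phi(k_0+\rho) \le 0$, and combining this with the non-negativity of $\phi$ yields $\phi(k_0+\rho)=0$.

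The main obstacle, as is typical for De Giorgi-type lemmas, is the bookkeeping of exponents: one has to guess both the geometric scale $2^{-n}$ in the definition of $k_n$ and the decay base $q=2^{\alpha/(\beta-1)}$ so that the nonlinear recursion driven by $\beta>1$ is tamed by the geometric shrinkage of $k_{n+1}-k_n$. The crucial role of the hypothesis $\beta>1$ is that it makes $q>1$ well-defined and allows the $[\phi(k_n)]^\beta$ factor to be absorbed into the decay; without it, no choice of $\rho$ would close the induction.
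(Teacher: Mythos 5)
Your proposal is correct and follows essentially the same route as the paper's own proof: the identical geometric sequence $k_n=k_0+\rho(1-2^{-n})$, the same recursion from the hypothesis, the same decay ansatz with base $q=2^{\alpha/(\beta-1)}$, and the same algebraic determination of $\rho=M[\phi(k_0)]^{(\beta-1)/\alpha}2^{\beta/(\beta-1)}$. The concluding step via monotonicity of $\phi$ is also handled the same way, so there is nothing to add.
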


A proof of Lemma \ref{FEME:lem1} can be found in \cite{wyw}.
Readers can also find more applications of this lemma
in the study of partial differential equations. For completeness, we outline
a proof of Lemma \ref{FEME:lem1} as follows. Let $\rho$ be a real number to be determined later, and set
$$
k_\tau=k_0+\rho-\frac{\rho}{2^\tau},\quad \tau=0,1,2,\cdots.
$$
It then follows from (\ref{FEME:eq1.new}) that the following recursive formula holds true
\begin{eqnarray}\label{FEME:eq1.new_2}
\phi(k_{\tau+1})\le \frac{M^\alpha
2^{(\tau+1)\alpha}}{\rho^\alpha}[\phi(k_\tau)]^\beta, \quad
\tau=0,1,2,\cdots
\end{eqnarray}
We claim that (\ref{FEME:eq1.new_2}) implies the following
\begin{eqnarray}\label{FEME:eq1.new_3}
\phi(k_{\tau})\le \frac{\phi(k_0)}{r^\tau}, \quad \tau=0,1,2,\cdots
\end{eqnarray}
with some real number $r>1$ to be chosen. In fact, (\ref{FEME:eq1.new_3})
can be proved by a mathematical induction. The formula (\ref{FEME:eq1.new_3})
is clearly true with any real number $r>1$ when $\tau=0$. Assume that
(\ref{FEME:eq1.new_3}) is valid for $\tau$. Now using
(\ref{FEME:eq1.new_2}) one obtains
\begin{eqnarray*}
\phi(k_{\tau+1})&\le& \frac{M^\alpha
2^{(\tau+1)\alpha}}{\rho^\alpha}[\phi(k_\tau)]^\beta
\\
&\le& \frac{\phi(k_0)}{r^{\tau+1}}\cdot \frac{M^\alpha
2^{(\tau+1)\alpha}}{\rho^\alpha
r^{\tau(\beta-1)-1}}[\phi(k_0)]^{\beta-1}.
\end{eqnarray*}
Now if we choose $r=2^{\alpha/(\beta-1)}$, then
\begin{eqnarray*}
\phi(k_{\tau+1}) \le \frac{\phi(k_0)}{r^{\tau+1}}\cdot
\frac{M^\alpha 2^{\alpha\beta/(\beta-1)}}{\rho^\alpha
}[\phi(k_0)]^{\beta-1}.
\end{eqnarray*}
From this, we see that (\ref{FEME:eq1.new_3}) is also valid for
$\tau+1$ if $\rho =
M[\phi(k_0)]^{(\beta-1)/\alpha}2^{\beta/(\beta-1)}$. Now
by taking $\tau\to +\infty$ in (\ref{FEME:eq1.new_3}), we
see that the left limit of $\phi$ at $k_0+\rho$ must be zero. This, together with
the given monotonicity of $\phi$, completes a proof for the De Giorgi Lemma.

\medskip

Let $p>2$ be any real number such that
\begin{equation}\label{sobolevp}
p<\left\{\begin{array}{ll} +\infty,\quad & d=2,
\\
\frac{2d}{d-2}, & d>2.\end{array}\right.
\end{equation}
Next, we introduce a number $k_*$ defined as follows
\begin{equation}\label{kzero}
k_*=\left\{ \begin{array}{ll}\displaystyle \sup_{x\in\partial\Omega}
\max\{I_hg(x),0\},&\quad \mbox{if $c\ge 0$},\\
\displaystyle\sup_{x\in\partial\Omega} I_hg(x),&\quad \mbox{if
$c\equiv 0$}.
\end{array}
 \right.
\end{equation}

\begin{assumptionletter}\label{assumptionA} Let the form $\mathfrak{Q}(w;u,v)$
be given by (\ref{bilinear-form}), and $u_h$ be the finite element approximation
of $u$ arising from (\ref{weakform-fem}). For any real number $k\ge k_*$, assume the
following holds true:
\begin{eqnarray}\label{hypothesis}
\mathfrak{Q}(u_h;(u_h-k)_-,(u_h-k)_+) \ge 0.
\end{eqnarray}
\end{assumptionletter}

We are now in a position to derive a maximum principle
for $P1$ conforming finite element approximations.

\begin{theorem}\label{thm5.2}
Let $u_h\in S_h$ be the $P1$-conforming finite element approximation
of (\ref{FEME:eq1}) and (\ref{bc-diri}) arising from the formula
(\ref{weakform-fem}). Denote by $I_hg$ the interpolation of the
Dirichlet boundary data (\ref{bc-diri}) that was used in the finite
element formula (\ref{weakform-fem}). Let $p$ and $r$ be real numbers satisfying (\ref{sobolevp})
and $1\leq r < p-1$. Assume that $f\in
L^{\frac{pr}{(p-1)(r-1)}}(\Omega)$ and the {\em Assumption} \ref{assumptionA} holds true. Also assume that
\begin{equation}\label{usualcondition-on-bc}
c(x,u_h) - \frac12 \nabla\cdot \bbb(x, u_h, \nabla u_h) \ge 0,\qquad \forall x\in \Omega.
\end{equation}
Then, there exists a constant $C=C(\Omega)$ such that
\begin{equation}\label{dmp-first-result}
\sup_{x\in \Omega} u_h(x) \le k_* + C\|f\|_{L^{\frac{pr}{(p-1)(r-1)}}},
\end{equation}
where $k_*$ is given by (\ref{kzero}). Moreover, the dependence of $C=C(\Omega)$ is given by
$$
C(\Omega)= C 2^{\frac{p-1}{p-1-r}} |\Omega|^{\frac{p-1-r}{pr}}.
$$
\end{theorem}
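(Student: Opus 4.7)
The plan is to mirror the proof of the continuous maximum principle (Theorem \ref{pde-mp}) in the finite element setting, with De Giorgi's iterative lemma (Lemma \ref{FEME:lem1}) as the driving tool. For each level $k \geq k_*$, I would set $w := (u_h - k)_+$. By the very definition (\ref{kzero}) of $k_*$, the function $w$ vanishes at every boundary node of $\T_h$, hence $w \in S_h^0$ and is admissible as a test function in (\ref{weakform-fem}).

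Testing (\ref{weakform-fem}) against $w$, then applying Lemma \ref{technical-lemma1} together with Assumption \ref{assumptionA}, gives $\mathfrak{Q}(u_h;w,w) \leq F(w) = \int_\Omega f w\, dx$. For the left-hand side, the ellipticity (\ref{ellipticity}) controls the leading diffusion term, while the first- and zero-order terms combine via integration by parts (legitimate because $w \in H_0^1(\Omega)$) to give $\int_\Omega (\bbb\cdot\nabla w)\,w\, dx + \int_\Omega c w^2\, dx = \int_\Omega (c - \tfrac12\nabla\cdot\bbb)\,w^2\, dx \geq 0$ by (\ref{usualcondition-on-bc}). Thus $\lambda\|\nabla w\|_{L^2}^2 \leq \int_\Omega f w\, dx$. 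Setting $\tilde A(k) := \{x\in\Omega : w(x) > 0\}$, a three-factor H\"older inequality with $1/q+1/p+1/s = 1$ (where $q = pr/[(p-1)(r-1)]$, so $s = pr/(p-1)$), combined with the Sobolev embedding $\|w\|_{L^p} \leq C_S\|\nabla w\|_{L^2}$ available by (\ref{sobolevp}), yields $\|w\|_{L^p} \leq (C_S^2/\lambda)\|f\|_{L^q}|\tilde A(k)|^{1/s}$.

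The De Giorgi iteration is then set up with the nodal level-set functional $\phi(k) := \sum_{A_j: u_h(A_j)\geq k} |\Omega_j|$, which is non-negative and non-increasing. For $t > k$, the discrete norm equivalence (\ref{normequivalence}) produces $(t-k)^p \phi(t) \leq C\|w\|_{L^p}^p$, while the finite-overlap property of macro elements gives $|\tilde A(k)|\leq C \phi(k)$ (every point of $\tilde A(k)$ lies in an element having at least one vertex where $u_h > k$). Chaining these estimates yields the recurrence
\begin{equation*}
\phi(t) \leq \left(\frac{M}{t-k}\right)^{p} \phi(k)^{(p-1)/r}, \qquad t > k \geq k_*,
\end{equation*}
with $M = C\|f\|_{L^{q}}$. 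Because $r < p-1$, one has $\beta := (p-1)/r > 1$, so Lemma \ref{FEME:lem1} (applied with $\alpha = p$ and the trivial bound $\phi(k_*)\leq C|\Omega|$) delivers a $\rho$ bounded above by $C\,2^{(p-1)/(p-1-r)}|\Omega|^{(p-1-r)/(pr)}\|f\|_{L^{q}}$ with $\phi(k_*+\rho)=0$. The vanishing of $\phi$ forces every nodal value of $u_h$ to be strictly less than $k_*+\rho$; since $u_h$ is piecewise linear it attains its maximum at a node, giving (\ref{dmp-first-result}).

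The main obstacle I anticipate is the reconciliation in the third paragraph: the finite element function $(u_h-k)_+$ is defined by nodal values rather than as the pointwise positive part $\max(u_h-k,0)$, so the pointwise support $\tilde A(k)$ and the discrete level set $\{A_j : u_h(A_j)\geq k\}$ are genuinely distinct objects. A careful bookkeeping argument exploiting the bounded finite-overlap property of macro elements---the same ingredient used in establishing (\ref{normequivalence})---is needed to control one by a mesh-independent multiple of the other. Once this is in place, the matching of the exponents in (\ref{FEME:eq1.new}) with those appearing in the stated form of $C(\Omega)$ is a direct computation.
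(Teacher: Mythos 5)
Your proposal is correct and follows essentially the same route as the paper: test with $(u_h-k)_+\in S_h^0$, invoke Lemma \ref{technical-lemma1} and Assumption \ref{assumptionA}, absorb the lower-order terms via integration by parts and (\ref{usualcondition-on-bc}), combine H\"older and Sobolev embedding with the nodal norm equivalence (\ref{normequivalence}), and close with De Giorgi's Lemma \ref{FEME:lem1} using $\beta=(p-1)/r>1$. The only differences are cosmetic --- you perform the H\"older estimate in a single three-factor step and use the overlapping sum $\sum_{u_h(A_j)\ge k}|\Omega_j|$ in place of the paper's $|G(k)|$, which are equivalent up to the finite-overlap constant you already identify.
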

\medskip

\begin{proof} Let $k\ge k_*$ be any real number. Denote by $\varphi=(u_h-k)_+$
the positive part of $u_h-k$ at nodal points. Since $k\ge k_*$ and
$k_*$ is no smaller than the maximum value of the finite element
solution $u_h$ on $\partial\Omega$, then $\varphi$ must vanish on
the boundary of $\Omega$; i.e.,
\begin{equation}\label{ukpzero}
\varphi(x)\in S_h^0.
\end{equation}
Thus, $\varphi$ is eligible as a test function in the finite element
formulation (\ref{weakform-fem}). By taking $v=\varphi$ in
(\ref{weakform-fem}), we obtain from (\ref{technical.01}) and the Assumption \ref{assumptionA}
that
\begin{eqnarray}\nonumber
F(\varphi)&=&\mathfrak{Q}(u_h;u_h, \varphi) \\
&=&\mathfrak{Q}(u_h;u_h, (u_h-k)_+) \nonumber \\
&\ge& \mathfrak{Q}(u_h;(u_h-k)_+, (u_h-k)_+) + \mathfrak{Q}(u_h;(u_h-k)_-, (u_h-k)_+)\nonumber \\
&\ge& \mathfrak{Q}(u_h;(u_h-k)_+, (u_h-k)_+).\label{mp.18}
\end{eqnarray}
Using the notation $\varphi=(u_h-k)_+$ in (\ref{mp.18}) we obtain
\begin{eqnarray}\label{mp.19}
(a\nabla \varphi, \nabla\varphi) + (\bbb\cdot\nabla\varphi, \varphi) + (c\varphi, \varphi) \equiv
\mathfrak{Q}(u_h;\varphi, \varphi) \le F(\varphi).
\end{eqnarray}
Since the usual integration by parts implies
$$
(\bbb\cdot\nabla\varphi, \varphi) = - (\varphi, \bbb\cdot\nabla\varphi) - (\varphi, (\nabla\cdot\bbb)\varphi),
$$
then we have
$$
(\bbb\cdot\nabla\varphi, \varphi) = -\frac12 ((\nabla\cdot\bbb)\varphi, \varphi).
$$
Substituting the above into (\ref{mp.19}) yields,
$$
(a\nabla \varphi, \nabla\varphi) + \left( (c-\frac12\nabla\cdot\bbb) \varphi, \varphi\right) \le F(\varphi),
$$
which, along with the condition (\ref{usualcondition-on-bc}), leads to
$$
(a\nabla \varphi, \nabla\varphi) \le F(\varphi).
$$

Now let $G(k)$ be the subset of $\Omega$ where $\varphi>0$; i.e.,
$$
G(k)=\{T:\ T\in \mathcal{T}_h, \; \varphi>0 \mbox{ for some $x\in
T$}\}.
$$
Denote by $|G(k)|$ the Lebesgue measure of the set $G(k)$. We are
going to show that $|G(k)|=0$ for sufficiently large values of $k$.
To this end, we apply the ellipticity (\ref{ellipticity}) and the
usual H\"older inequality to (\ref{mp.19}) to obtain
\begin{eqnarray}\label{interesting}
\lambda \int_\Omega |\nabla \varphi|^2 dx \le
\|\varphi\|_{L^p(\Omega))} \|f\|_{L^q(G(k))},
\end{eqnarray}
where $p>2$ satisfies (\ref{sobolevp}) and $q$ is the conjugate of
$p$; i.e., $\frac{1}{p}+\frac{1}{q}=1$. Here the $L^q$ norm of $f$ was
taken on the support of $\varphi$ for the obvious reason. Combining the usual Sobolev
embedding with the estimate (\ref{interesting}) yields
\begin{eqnarray}\label{startingpoint}
\|\varphi\|_{L^p}^2&\le& C\|\nabla \varphi\|_{L^2}^2 \le
C\|f\|_{L^q(G(k))} \|\varphi\|_{L^p}.
\end{eqnarray}
It follows that
\begin{eqnarray*}
\|\varphi\|_{L^p}\le C\|f\|_{L^q(G(k))} \le
C\|f\|_{L^{qs}}|G(k)|^{\frac{1}{qr}},
\end{eqnarray*}
where $r\ge 1$ and $\frac{1}{r}+\frac{1}{s}=1$ are arbitrary real
numbers. The above inequality can be rewritten as
$$
\|\varphi\|_{L^p}^p\le C\|f\|_{L^{qs}}^p|G(k)|^{\frac{p}{qr}}.
$$
Now using the norm equivalence (\ref{normequivalence}) we obtain
\begin{equation}\label{hello.08}
C_0 \sum_{j=1}^N [(u_h-k)_+(A_j)]^p |\Omega_j| \leq
C\|f\|_{L^{qs}}^p|G(k)|^{\frac{p}{qr}}.
\end{equation}
It is not hard to see that $G(k)$ is the union of all the
macro-elements $\Omega_j$ so that $u_h(A_j)>k$. For any $\rho>k$, one
would have a corresponding set $G(\rho)$. Moreover, if
$\Omega_{j_0}\subset G(\rho)$, then we must have
$u_h(A_{j_0})>\rho>k$. This implies that $\Omega_{j_0}\subset G(k)$.
Therefore, we have
\begin{eqnarray*}
C_0\sum_{j=1}^N [(u_h-k)_+(A_j)]^p |\Omega_j|&\ge&
C_0\sum_{j=1,\cdots,N; u_h(A_j)> \rho} [(u_h-k)_+(A_j)]^p
|\Omega_j|\\
&\ge & C_0 \ (\rho-k)^p \sum_{j=1,\cdots,N; u_h(A_j)> \rho}
|\Omega_j|\\
&\ge& \tilde C_0 (\rho-k)^p |G(\rho)|.
\end{eqnarray*}
Substituting the above inequality into (\ref{hello.08}) gives
\begin{equation}\label{hello.09}
 (\rho-k)^p |G(\rho)|\leq
C\|f\|_{L^{qs}}^p|G(k)|^{\frac{p}{qr}}.
\end{equation}
Thus, for any $\rho>k$, we have
$$
|G(\rho)|\le
\left(\frac{C\|f\|_{L^{qs}}}{\rho-k}\right)^p|G(k)|^{\frac{p}{qr}}.
$$
Note that $q=\frac{p}{p-1}$ and $s=\frac{r}{r-1}$. Thus,
$$
|G(\rho)|\le
\left(\frac{C\|f\|_{L^{\frac{pr}{(p-1)(r-1)}}}}{\rho-k}\right)^p|G(k)|^{\frac{p-1}{r}}.
$$
Since, by assumption, $p>2$ and $1\le r< p-1$, then we have
$\frac{p-1}{r}>1$. Thus, with $\phi(s)=|G(s)|$, it follows from the
De Giorgi's Lemma \ref{FEME:lem1} that
\begin{equation}\label{bravo}
|G(d+k_*)|=0,
\end{equation}
where
$$
d= C 2^{\frac{p-1}{p-1-r}}
|\Omega|^{\frac{p-1-r}{pr}}\|f\|_{L^{\frac{pr}{(p-1)(r-1)}}}.
$$
The equation (\ref{bravo}) implies that $u_h\le d+k_*$ on $\Omega$,
which can be rewritten as
$$
\sup_{\Omega} u_h\le k_* +
C 2^{\frac{p-1}{p-1-r}}
|\Omega|^{\frac{p-1-r}{pr}}\|f\|_{L^{\frac{pr}{(p-1)(r-1)}}}.
$$
This completes the proof.
\end{proof}

\bigskip
The rest of this section will establish another discrete maximum
principle for the underlying quasi-linear second order equation when
$f\le 0$. The result can be stated as follows.

\begin{theorem}\label{thm5.3}
Let $u_h\in S_h$ be the $P1$-conforming finite element approximation
of (\ref{FEME:eq1}) and (\ref{bc-diri}) arising from the formula
(\ref{weakform-fem}). Let $f\le 0$ be any locally integrable
function, and the ellipticity (\ref{ellipticity}) and the
boundedness (\ref{boundedness}) are satisfied. Assume that the {\em
Assumption} \ref{assumptionA} holds true. Then, we have
\begin{equation}\label{dmp-case-nof}
\sup_{x\in \Omega} u_h(x) \le
\left\{
\begin{array}{ll}
\sup_{x\in\partial\Omega}
\max(I_hg(x), 0),&\quad \mbox{ if \ $c\ge 0$, }\\
\sup_{x\in\partial\Omega} I_hg(x), &\quad \mbox{ if \ $c\equiv 0$,}
\end{array}
\right.
\end{equation}
provided that the meshsize $h$ is sufficiently small such that
\begin{equation}\label{hnucondition}
h\nu < 1.
\end{equation}
\end{theorem}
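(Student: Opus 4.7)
The plan is to mimic the De Giorgi strategy from the proof of Theorem~\ref{thm5.2}, using $f\le 0$ in place of the $L^{pr/((p-1)(r-1))}$ bound on $f$ and exploiting the meshsize condition $h\nu<1$ to close the estimate. First, I would set $\varphi=(u_h-k)_+$ for an arbitrary $k\ge k_*$; since $k\ge k_*$ forces $\varphi$ to vanish at every boundary node, $\varphi\in S_h^0$ is admissible in (\ref{weakform-fem}). Combining Lemma~\ref{technical-lemma1} and Assumption~\ref{assumptionA} with the observation that $F(\varphi)=\int_\Omega f\varphi\,dx\le 0$ (because $f\le 0$ and $\varphi\ge 0$) yields
\[
\mathfrak{Q}(u_h;\varphi,\varphi)\le F(\varphi)\le 0.
\]
In either case $c\ge 0$ or $c\equiv 0$ the zeroth-order term $(c\varphi,\varphi)$ is nonnegative and may be dropped; applying (\ref{ellipticity}), (\ref{boundedness}) and Cauchy--Schwarz to the surviving first-order term would then yield the clean estimate $\|\nabla\varphi\|_{L^2}\le\nu\,\|\varphi\|_{L^2}$.

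Since $\varphi\in H^1_0(\Omega)$ is supported on the set $G(k)$ introduced in the proof of Theorem~\ref{thm5.2}, combining the Sobolev embedding into $L^p$ (with $p$ satisfying (\ref{sobolevp})) and H\"older's inequality restricted to $G(k)$ produces
\[
\|\varphi\|_{L^2}\le C\,|G(k)|^{1/2-1/p}\,\|\nabla\varphi\|_{L^2},
\]
with $C$ independent of $h$. Substituting into the previous estimate and supposing $\nabla\varphi\not\equiv 0$ forces the strictly positive lower bound
\[
|G(k)|^{1/2-1/p}\ge (C\nu)^{-1}.
\]

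The main obstacle is to convert this lower bound into a contradiction using $h\nu<1$. I would argue by contradiction: suppose $M:=\sup_\Omega u_h>k_*$. Piecewise linearity ensures $M$ is attained at some nodal point $A^*$, and choosing $k$ strictly between $k_*$ and $M$ larger than every nodal value except the maximizers collapses $G(k)$ to the macroelements around the maximizing vertices; shape regularity of $\T_h$ then gives $|G(k)|\le C_R h^d$. Comparing this with the Sobolev lower bound, and taking $p$ close to $2d/(d-2)$ for $d\ge 3$ or $p$ sufficiently large for $d=2$, forces $h\nu$ to exceed a generic constant, contradicting $h\nu<1$ once the implicit constants are absorbed. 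Thus $\varphi\equiv 0$ and $\sup u_h\le k_*$. The most delicate step will be to calibrate the Sobolev exponent $p$ and the shape-regularity constant so that the contradiction emerges precisely under $h\nu<1$; the degenerate case in which the maximum is attained on a set of positive measure reduces to the previous one by substituting $u_h\equiv M$ into the quasi-linear equation and using the sign assumptions on $c$ and $f$.
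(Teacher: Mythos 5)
Your setup is right up to the point where you obtain $\|\nabla\varphi\|_{L^2}\le\nu\,\|\varphi\|_{L^2}$, and your instinct to pick $k$ just below the largest nodal value is exactly the right move. But the way you then try to close the argument --- global Sobolev embedding plus the measure bound $|G(k)|\le C_R h^d$ --- cannot produce a contradiction from the hypothesis $h\nu<1$. Chasing your exponents: the two inequalities combine to $C\nu\,(C_Rh^d)^{\frac12-\frac1p}\ge 1$, and since $p$ is constrained by (\ref{sobolevp}) the exponent $d\bigl(\tfrac12-\tfrac1p\bigr)$ is \emph{strictly} less than $1$ (it only tends to $1$ as $p\to\infty$ for $d=2$ or $p\to 2d/(d-2)$ for $d=3$, with the Sobolev constant $C$ blowing up or at least not under control in that limit). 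So what you derive is $C\nu h^{1-\epsilon}\ge 1$ for some $\epsilon>0$, which is perfectly compatible with $h\nu<1$: take, say, $\nu\sim h^{-1+\delta}$ with $0<\delta<\epsilon$. No amount of ``calibrating'' $p$ and the shape-regularity constant fixes this; the delicate step you flag at the end is not delicate but impossible by this route.

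The paper's proof avoids the Sobolev embedding entirely and is purely local. With $k_\#$ the largest nodal value of $u_h$ strictly below $k_M=\sup u_h$ and $k_\#\le k<k_M$, every element $T$ on which $\nabla\varphi\neq 0$ must have a vertex $A$ where $\varphi(A)=0$ (a vertex either carries the value $k_M$, giving $\varphi>0$ there, or a value $\le k_\#\le k$, giving $\varphi=0$; nonconstancy forces at least one of the latter). Writing $\varphi(x)=(x-A)\cdot\nabla\varphi$ on such a $T$ gives the element-wise Poincar\'e inequality $\int_T|\varphi|^2\,dT\le h_T^2\int_T|\nabla\varphi|^2\,dT$ with the explicit constant $h_T^2$ and no hidden constants. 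Summed over the set $D_k=\{\nabla\varphi\neq0\}$ (which is where the first-order term actually lives, so the relevant norm is $\|\varphi\|_{L^2(D_k)}$, not $\|\varphi\|_{L^2(\Omega)}$), this turns $\|\nabla\varphi\|_{L^2}\le\nu\|\varphi\|_{L^2(D_k)}$ into $1\le h\nu$, which is the exact contradiction with (\ref{hnucondition}). You should replace your Sobolev/measure step with this local vanishing-vertex argument; the rest of your outline (admissibility of $\varphi$, use of Lemma \ref{technical-lemma1}, Assumption \ref{assumptionA}, and $F(\varphi)\le0$) matches the paper. Your closing remark about substituting $u_h\equiv M$ into the equation when the maximum is attained on a set of positive measure is not meaningful for a discrete function and is in any case unnecessary once the local argument is in place.
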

\medskip

\begin{proof} Assume that the maximum principle (\ref{dmp-case-nof}) does not hold true.
We show that such an assumption shall lead to a contradiction. To
this end, using the notation as given in (\ref{kzero}), we see that
$k_*<k_M \equiv \sup_{x\in \Omega} u_h(x)$. Let $k_\#$ be the
largest nodal value of $u_h$ (including the nodal points on the
boundary of $\Omega$) that is smaller than $k_M$. Let $k$ be any
real number such that $k_\#\le k < k_M$. Let $\varphi=(u_h-k)_+\in
S_h$ be the positive part of $u_h-k$ at nodal points. Since $k\ge
k_\#\ge k_*$ and $k_*$ is no smaller than the maximum value of the
finite element solution $u_h$ on $\partial\Omega$, then
(\ref{ukpzero}) holds true. By choosing $v=\varphi$ in
(\ref{weakform-fem}), we obtain from (\ref{technical.01}) and the
assumption of $f\le 0$ that
\begin{eqnarray}\nonumber
0\ge F(\varphi)&=&\mathfrak{Q}(u_h;u_h,\varphi) =\mathfrak{Q}(u_h;u_h, (u_h-k)_+)  \\
&\ge& \mathfrak{Q}(u_h;(u_h-k)_+, (u_h-k)_+) + \mathfrak{Q}(u_h;(u_h-k)_-, (u_h-k)_+).
\label{newmp.18}
\end{eqnarray}
Now using the Assumption \ref{assumptionA} and the notation of $\varphi = (u_h-k)_+$ we obtain
$$
\mathfrak{Q}(u_h;\varphi, \varphi) \le 0,
$$
which leads to
\begin{eqnarray}\label{newmp.19}
(a\nabla \varphi, \nabla\varphi) + (\bbb\cdot\nabla\varphi, \varphi)
+ (c\varphi,\varphi)\le 0.
\end{eqnarray}
Thus, we have from the ellipticity (\ref{ellipticity}), the
boundedness (\ref{boundedness}), and the condition of $c\ge 0$ that
\begin{eqnarray}\nonumber
\lambda \|\nabla\varphi\|_{L^2}^2 &\leq& (a\nabla \varphi,
\nabla\varphi)\\
&\le& \left| (\bbb\cdot\nabla\varphi, \varphi) \right|\nonumber\\
&\le& \lambda \nu \|\nabla\varphi\|_{L^2}\
\|\varphi\|_{L^2(D_k)},\label{newmp.850}
\end{eqnarray}
where $D_k$ is the subset of $\Omega$ on which $\nabla \varphi\neq
0$. Note that $D_k$ is a collection of triangular or tetrahedral
elements. It follows from the last inequality that
\begin{equation}\label{newmp.851}
\|\nabla\varphi\|_{L^2} \leq \nu \|\varphi\|_{L^2(D_k)}.
\end{equation}
The inequality (\ref{newmp.851}) can be rewritten by using element
integrals as follows
\begin{equation}\label{newmp.861}
\sum_{T\in D_k} \int_T |\nabla\varphi |^2 dT \leq \nu^2 \sum_{T\in
D_k} \int_T |\varphi|^2 dT.
\end{equation}
On each $T\subset D_k$, since $\nabla\varphi\neq 0$, then $\varphi$
is not a constant on $T$. Therefore, the selection of $k$ implies
that $\varphi=0$ at one of the vertices of $T$. Assume that
$\varphi(A)=0$ with $A$ being a vertex point of $T$. Then, we have
from $\varphi(x)=(x-A)\cdot\nabla\varphi$ that
$$
\int_T|\varphi|^2 dT \leq h_T^2 \int_T |\nabla \varphi|^2 dT,
$$
where $h_T$ is the diameter of the element $T$. Substituting the
above into (\ref{newmp.861}) we obtain
\begin{equation}\label{newmp.871}
\sum_{T\in D_k} \int_T |\nabla\varphi |^2 dT \leq \nu^2 \sum_{T\in
D_k} h_T^2 \int_T |\nabla\varphi|^2 dT\leq \nu^2 h^2 \sum_{T\in D_k}
\int_T |\nabla\varphi|^2 dT,
\end{equation}
which leads to
$$
1\leq h\nu.
$$
The above inequality is an obvious contradiction to the assumption
of $h \nu < 1$ as given in (\ref{hnucondition}). This completes the
proof.
\end{proof}

\section{Nodal Basis and Geometry of Finite Elements}

On each triangle or tetrahedron $T\in {\cal T}_h$, the finite
element function $v\in S_h$ is a linear function and can be
represented by local shape functions $\ell_i=\ell_i(x)$ defined as
follows: (1) $\ell_i$ is linear on $T$, (2)
$\ell_i(A(j))=\delta_{ij}$ where $\delta_{ij}$ is the usual
Kronecker symbol (see Fig. \ref{fig:triangle}). The local
representative property asserts that
\begin{equation}\label{local-representation}
v(x) = \sum_{i=1}^{d+1} v(A(i)) \ell_i(x), \qquad\forall x\in T.
\end{equation}

\begin{figure}
\begin{center}
\begin{tikzpicture}[rotate=233]
    %define the points of triangle
    %you can also use \coordinate to define these points, display in the following case
    \path (0,0) coordinate (A3);
    \path (3,0) coordinate (A1);
    \path (3.4,-0.2) coordinate (A11);
    \path (0,4) coordinate (A2);
    \path (-0.2,4.4) coordinate (A21);
    \path (1.0,1.0) coordinate (center);
    \path (1.5,0) coordinate (A1half);
    \path (0,2) coordinate (A2half);
    \path (1.5,2) coordinate (A3half);
    \path (2.0,0.5) coordinate (a);
    \path (A1half) ++(0,-0.6) coordinate (A1To);
    \path (A2half) ++(-0.6,0) coordinate (A2To);
    \path (A3half) ++(38:0.6cm) coordinate (A3To);
    \draw (A3) -- (A1) -- (A2) -- (A3);
    \filldraw[black] (A1) circle(0.1);
    \filldraw[black] (A2) circle(0.1);
    \filldraw[black] (A3) circle(0.1);
    \draw node[above] at (A3)(2.625, 0.5) {A(3)};
    %\draw node[below] at (A1) {A(1)};
    \draw node[below] at (A11) {A(1)};
    \draw node[below] at (A21) {A(2)};
    \draw node at (center) {T};

    \draw[->,thick] (A1half) -- (A1To) node[above]{$\mathbf{n}(2)$};
    \draw[->,thick] (A2half) -- (A2To) node[right]{$\mathbf{n}(1)$};
    \draw[->,thick] (A3half) -- (A3To) node[right]{$\mathbf{n}(3)$};
    \draw (2.625, 0.5) arc(135:180:0.725);
    \draw node at (a) {$\alpha_{23}$};
\end{tikzpicture}
\end{center}
\caption{A triangular element with acute angles}
\label{fig:triangle}
\end{figure}
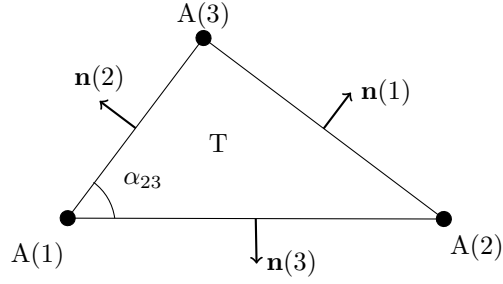

Note that the gradient of a function $\psi=\psi(x)$ is a vector
along which the function $\psi$ increases the most. Thus, the
gradient of the shape function $\ell_i$ would be parallel to the
outward normal direction of the edge/face opposite to the vertex
$A(i)$; i.e.,
$$
\nabla\ell_i = \alpha_i \bn(i),
$$
where $\bn(i)$ represents the outward normal direction to the edge/face opposite
to the vertex $A(i)$ (see Fig. \ref{fig:triangle} and Fig. \ref{fig:tetrahedron}).
Denote by $\|\xi\|$ the $\ell^2$-length of any vector $\xi\in \mathbb{R}^d$.
It follows that
$$
\alpha_i=-\|\nabla \ell_i\|.
$$
Thus, we have
\begin{equation}\label{direction}
\nabla\ell_i = - \|\nabla \ell_i\| \bn(i).
\end{equation}

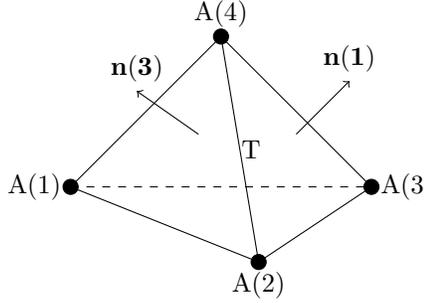
\begin{figure}
\begin{center}
\begin{tikzpicture}
\coordinate (A1) at (-2,0); \coordinate (A2) at (0.5,-1);
\coordinate (A3) at (2,0); \coordinate (A4) at (0,2); \coordinate
(center) at (0.4,0.5); \coordinate (n3) at (-0.3,0.7); \path (n3)
++(145:1) coordinate (n3end); \coordinate (n4) at (1.0,0.7); \path
(n4) ++(45:1) coordinate (n4end); \draw node[left] at (A1) {A(1)};
\draw node[below] at (A2) {A(2)}; \draw node[right] at (A3) {A(3)};
\draw node[above] at (A4) {A(4)}; \draw node at (center) {T}; \draw
(A1)--(A2)--(A3)--(A4)--cycle; \draw (A2)--(A4); \draw[dashed]
(A1)--(A3); \draw[->] (n3)--(n3end) node[above]{$\mathbf{n(3)}$};
\draw[->] (n4)--(n4end) node[above]{$\mathbf{n(1)}$};
    \filldraw[black] (A1) circle(0.1);
    \filldraw[black] (A2) circle(0.1);
    \filldraw[black] (A3) circle(0.1);
    \filldraw[black] (A4) circle(0.1);
\end{tikzpicture}
\caption{A tetrahedron with acute interior dihedral angles}
\label{fig:tetrahedron}
\end{center}
\end{figure}

The angles of the triangle $\Delta A(1)A(2)A(3)$ (see Fig.
\ref{fig:triangle}) can be characterized by using the outward normal
directions $\bn(i)$. For example, the angle $\alpha_{23}$ is related
to the angle of the two normal vectors $\bn(2)$ and $\bn(3)$ as
follows:
$$
\alpha_{23} = \pi -  \angle(\bn(2),\bn(3)),
$$
where $\angle(\bn(2),\bn(3))$ stands for the angle between $\bn(2)$
and $\bn(3)$. Likewise, for the tetrahedron $T$ as depicted in Fig.
\ref{fig:tetrahedron}, the interior angle between the two planes
$P(A(1), A(2), A(4))$ and $P(A(2), A(3), A(4))$ can be defined as
$$
\theta = \pi-\angle(\bn(1), \bn(3)).
$$
The angle $\theta$ is known as an interior dihedral angle.  The
definition of other five interior dihedral angles for $T$ can be
defined similarly. For simplicity, we introduce the following
notation:
\begin{equation}\label{angles}
\alpha_{ij}:=\pi - \angle(\bn(i), \bn(j)).
\end{equation}
It follows from (\ref{direction}) that
\begin{equation}\label{angles-ell}
\alpha_{ij}=\pi - \angle(\nabla\ell_i, \nabla\ell_j).
\end{equation}
The triangle $T$ is called non-obtuse if all the angles satisfy
$\alpha_{ij}\le \pi/2$. It is said to be acute if
$\alpha_{ij}<\pi/2$. Likewise, a tetrahedron $T$ is called acute if
each of its six interior dihedral angles is less than $\pi/2$ in
radian; $T$ is said to be non-obtuse if all six interior dihedral
angles are no more than $\pi/2$ in radian. For the purpose of the
maximum principles for finite element approximations, we introduce
the following concept.
\medskip
\begin{definition}
The finite element partition $\T_h$ is called
$\mathcal{O}(h^\alpha)$-acute if there exists a parameter $\gamma>0$
such that for each element $T\in \T_h$ we have $\alpha_{ij}\le
\frac{\pi}{2}-\gamma h^\alpha$, where $\alpha\ge0$ and $h$ is the
meshsize of $\T_h$.
\end{definition}

\section{Verification of the Key Assumption for DMP}\label{Section-Technical}
Recall that the validity of DMPs as shown in Theorems
\ref{thm5.2} and \ref{thm5.3} is based on the Assumption \ref{assumptionA} which states
that
\begin{eqnarray}\label{remark.01new}
\mathfrak{Q}(u_h;(u_h-k)_-,(u_h-k)_+) \ge 0
\end{eqnarray}
for all $k\ge k_*$. The goal of this section is to verify the above assumption under certain
conditions for the finite element partition $\T_h$.

\subsection{An Element-Based Approach}

By an element-wise approach, we mean a representation of the form
$\mathfrak{Q}(u_h;(u_h-k)_-,(u_h-k)_+)$ as integrals over each element $T\in\T_h$.
To verify the assumption (\ref{remark.01new}), we shall explore conditions that make each element integral be
non-negative. To this end, on each element $T\in \T_h$,
we use the local shape functions $\ell_j$ to represent both $(u_h-k)_-$ and $(u_h-k)_+$
as follows
\begin{eqnarray*}
(u_h-k)_-(x)&=&\sum_{i=1}^{d+1} (u_h(A(i))-k)_-\ell_i(x),\\
(u_h-k)_+(x)&=&\sum_{j=1}^{d+1} (u_h(A(j))-k)_+\ell_j(x).
\end{eqnarray*}
Denote by $\varphi= (u_h-k)_+$ and $\psi=(u_h-k)_-$. It follows that
\begin{eqnarray*}\label{hello.99}
\mathfrak{Q}(u_h;(u_h-k)_-,(u_h-k)_+) &=& \mathfrak{Q}(u_h;\psi,\varphi) \\
&=&\sum_{T\in \T_h}\left\{(a\nabla\psi, \nabla\varphi)_T+(\bbb\cdot\nabla\psi, \varphi)_T +(c\psi, \varphi)_T  \right\}
\end{eqnarray*}
On each element $T$, we have
\begin{eqnarray}\label{mainterm.05v}
&&(a\nabla\psi, \nabla\varphi)_T+(\bbb\cdot\nabla\psi, \varphi)_T +(c\psi, \varphi)_T\nonumber \\
=&&\sum_{i,j=1}^{d+1} (u_h(A(i))-k)_-\ (u_h(A(j))-k)_+ \int_T \left\{ a
\nabla\ell_i \cdot \nabla\ell_j + \bbb\cdot(\nabla\ell_i) \ell_j + c
\ell_i\ell_j \right\}dx.\nonumber
\end{eqnarray}
Using the angle relation (\ref{angles-ell}) we obtain
\begin{eqnarray*}
\nabla\ell_i \cdot \nabla\ell_j &=& \|\nabla\ell_i\|\ \|\nabla\ell_j\| \cos(\angle(\nabla\ell_i,\nabla\ell_j))\\
&=& \|\nabla\ell_i\|\ \|\nabla\ell_j\| \cos(\pi-\alpha_{ij})\\
&=& - \|\nabla\ell_i\|\ \|\nabla\ell_j\| \cos(\alpha_{ij}).
\end{eqnarray*}
Thus, it follows from the boundedness (\ref{boundedness}) that
\begin{eqnarray*}
&&-\int_T \left\{a \nabla\ell_i \cdot \nabla\ell_j +
\bbb\cdot(\nabla\ell_i) \ell_j + c \ell_i\ell_j
\right\}dx \\
&&=\int_T \left\{a \|\nabla\ell_i\|\
\|\nabla\ell_j\|\cos(\alpha_{ij}) - \bbb\cdot(\nabla\ell_i) \ell_j -
c \ell_i\ell_j
\right\}dx\\
&&\ge \int_T \left\{a \|\nabla\ell_i\|\  \|\nabla\ell_j\|\cos(\alpha_{ij}) -\|\bbb\|\ \|\nabla\ell_i\| - |c|\right\}dx\\
&&\ge \int_T \left\{a \|\nabla\ell_i\|\
\|\nabla\ell_j\|\cos(\alpha_{ij}) -\lambda\nu\
\|\nabla\ell_i\| - \lambda \nu \right\}dx.\\
\end{eqnarray*}
Assume that the element $T$ is non-obtuse (i.e., $0\le\alpha_{ij}\le
\pi/2$). Then we have from the above inequality and the ellipticity
(\ref{ellipticity}) that
\begin{eqnarray*}
&&-\int_T \left\{a \nabla\ell_i \cdot \nabla\ell_j +
\bbb\cdot(\nabla\ell_i) \ell_j + c \ell_i\ell_j
\right\}dx \\
&&\ge \lambda \int_T \left\{\|\nabla\ell_i\|\
\|\nabla\ell_j\|\cos(\alpha_{ij}) -\nu(\|\nabla\ell_i\| +1)\right\}dx.\\
\end{eqnarray*}
Next, we see from Taylor expansion, for $\alpha_{ij}\in
[\rho_0,\pi/2]$ with $\rho_0>0$ being a fixed angle, there is a
constant $\gamma^*>0$ such that
$$
\cos(\alpha_{ij})\ge \gamma^*\left(\frac{\pi}{2}-\alpha_{ij}\right).
$$
Observe that both $\|\nabla\ell_i\|$ and $\|\nabla\ell_j\|$ are of
size $\mathcal{O}(h_T^{-1})$ where $h_T$ is the size of $T$. Thus,
with $|T|$ being the measure of $T$, we have
\begin{eqnarray*}
&&-\int_T \left\{a \nabla\ell_i \cdot \nabla\ell_j +
\bbb\cdot(\nabla\ell_i) \ell_j + c \ell_i\ell_j
\right\}dx \\
&&\ge \lambda \gamma^*\int_T \left\{\|\nabla\ell_i\|\  \|\nabla\ell_j\|\left(\pi/2 -\alpha_{ij}\right) -\nu(\|\nabla\ell_i\| +1)\right\}dx\\
&&\ge \lambda^* \|\nabla\ell_i\|\  \|\nabla\ell_j\|\ |T|
\end{eqnarray*}
for some $\lambda^*>0$ when the size of $T$ is sufficiently small
and $\pi/2-\alpha_{ij}\ge \gamma h$ for a large, but fixed constant
$\gamma$. In the case of $\bbb=0$, the angle requirement can be
weakened to $\pi/2-\alpha_{ij}\ge \gamma h^2$. The very same argument holds true if $u_h$ is replaced by
any finite element function $v\in S_h$. The result can be
summarized into a lemma as follows.

\medskip
\begin{lemma}\label{technical-lemma2}
Let $v\in S_h$ be any finite element function and $k$ any real
number. Assume that the ellipticity (\ref{ellipticity}) and the
boundedness (\ref{boundedness}) hold true. Assume also that the
partition $\T_h$ is $\mathcal{O}(h^\alpha)$-acute. Then, the
following results hold true:
\medskip
\begin{enumerate}
\item[(i)] For general $\bbb$ and $c\ge 0$, with $\alpha=1$, we have
\begin{eqnarray}\label{technical.08}
&&\mathfrak{Q}(v;(v-k)_-,(v-k)_+)\ge \\
&& \lambda^* \sum_{T\in \T_h} \sum_{i\neq j} |(v(A(i))-k)_-|\
|(v(A(j))-k)_+|\
 \|\nabla\ell_i\|\  \|\nabla\ell_j\|\ |T|,\nonumber
\end{eqnarray}
provided that the meshsize $h$ for the partition $\T_h$ is sufficiently small. Here $\lambda^*$
is a positive number smaller than $\lambda$ and $|T|$ stands for the area or volume of the element $T$.
\item[(ii)] For the case $\bbb=0$ and $c\ge 0$, with $\alpha=2$, we have
\begin{eqnarray}\label{technical.08-01}
&&\mathfrak{Q}(v;(v-k)_-,(v-k)_+)\ge \\
&&\lambda^* \sum_{T\in \T_h} \sum_{i\neq j} |(v(A(i))-k)_-|\
|(v(A(j))-k)_+|\ \|\nabla\ell_i\|\  \|\nabla\ell_j\|\ |T|,\nonumber
\end{eqnarray}
provided that $h$ is sufficiently small.
\item[(iii)] For the case of $\bbb=0$ and $c=0$, we have
\begin{eqnarray}\label{technical.09}
&& \quad \mathfrak{Q}(v;(v-k)_-,(v-k)_+)\ge\\
&& \lambda \sum_{T\in \T_h} \sum_{i\neq j} |(v(A(i))-k)_-|\
|(v(A(j))-k)_+|\
 \|\nabla\ell_i\|\  \|\nabla\ell_j\|\ \cos(\alpha_{ij}) |T|,\nonumber
\end{eqnarray}
as long as each $T\in \T_h$ is non-obtuse.
\end{enumerate}
In other words, the Assumption \ref{assumptionA} is satisfied if the finite element
partition $\T_h$ satisfies certain angle conditions.
\end{lemma}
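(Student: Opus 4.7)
The plan is to prove the three inequalities elementwise, by unfolding $\mathfrak{Q}(v;(v-k)_-,(v-k)_+)$ as a double sum over local shape functions. On each $T\in\T_h$ I expand $(v-k)_\pm=\sum_i (v(A(i))-k)_\pm \ell_i$ and collect terms to obtain
\begin{equation*}
\mathfrak{Q}(v;(v-k)_-,(v-k)_+)=\sum_{T\in\T_h}\sum_{i,j=1}^{d+1}(v(A(i))-k)_-\,(v(A(j))-k)_+\,M^T_{ij},
\end{equation*}
where $M^T_{ij}:=\int_T\{a\nabla\ell_i\cdot\nabla\ell_j+\bbb\cdot(\nabla\ell_i)\ell_j+c\ell_i\ell_j\}\,dx$. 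The diagonal contributions $i=j$ vanish identically, because $(v(A(i))-k)_+\cdot(v(A(i))-k)_-=0$ at every node by the definition of the nodal positive/negative parts. For the surviving off-diagonal pairs the product $(v(A(i))-k)_-(v(A(j))-k)_+$ is non-positive with absolute value $|(v(A(i))-k)_-|\,|(v(A(j))-k)_+|$, so the whole lemma reduces to a uniform lower bound on $-M^T_{ij}$ for each $i\neq j$.

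Next, I would substitute the geometric identity $\nabla\ell_i\cdot\nabla\ell_j=-\|\nabla\ell_i\|\,\|\nabla\ell_j\|\cos(\alpha_{ij})$ from (\ref{angles-ell}) and use the ellipticity (\ref{ellipticity}) together with the boundedness (\ref{boundedness}) and $|\ell_j|\leq 1$ to obtain the pointwise-on-$T$ estimate
\begin{equation*}
-M^T_{ij}\;\geq\;\lambda\int_T\bigl\{\|\nabla\ell_i\|\,\|\nabla\ell_j\|\cos(\alpha_{ij})-\nu(\|\nabla\ell_i\|+1)\bigr\}dx,
\end{equation*}
with the $\nu$-terms dropped whenever the corresponding coefficient vanishes. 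For part (iii), where $\bbb=0$ and $c=0$, the nuisance terms disappear altogether and the non-obtuseness of $T$ directly delivers (\ref{technical.09}). For parts (i) and (ii) I would use the Taylor bound $\cos(\alpha_{ij})\geq\gamma^*(\pi/2-\alpha_{ij})$ on a bounded range of $\alpha_{ij}$ and exploit the standard scaling $\|\nabla\ell_i\|\sim h_T^{-1}$ to weigh the leading positive term against the adversaries.

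The sole delicate point, and what I expect to be the main obstacle, is precisely this scale matching in (i) and (ii). The leading positive contribution scales like $h_T^{-2}(\pi/2-\alpha_{ij})$, while the drift adversary $\lambda\nu\|\nabla\ell_i\|$ scales like $h_T^{-1}$ and the zeroth-order adversary $\lambda\nu$ is $\mathcal{O}(1)$. Hence in case (i) I need $\pi/2-\alpha_{ij}\geq\gamma h$, i.e.\ $\mathcal{O}(h)$-acuteness; in case (ii) only the $\mathcal{O}(1)$ adversary survives and the weaker $\pi/2-\alpha_{ij}\geq\gamma h^2$ suffices. Under these conditions, for $h$ small enough a fixed fraction of the leading term absorbs the adversaries uniformly in $T$, producing $-M^T_{ij}\geq\lambda^*\|\nabla\ell_i\|\,\|\nabla\ell_j\|\,|T|$ for some $\lambda^*\in(0,\lambda)$. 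Re-summing over $T$ and over $i\neq j$ then yields (\ref{technical.08}) and (\ref{technical.08-01}), and the final sentence of the lemma follows by applying the result with $v=u_h$ to see that Assumption \ref{assumptionA} holds whenever the stated angle conditions are imposed.
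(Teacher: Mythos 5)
Your proposal is correct and follows essentially the same element-based route as the paper: expand both nodal parts in the local shape functions, reduce to a lower bound on $-\int_T\{a\nabla\ell_i\cdot\nabla\ell_j+\bbb\cdot(\nabla\ell_i)\ell_j+c\ell_i\ell_j\}\,dx$ for $i\neq j$ via the identity $\nabla\ell_i\cdot\nabla\ell_j=-\|\nabla\ell_i\|\,\|\nabla\ell_j\|\cos(\alpha_{ij})$, then balance the $h_T^{-2}(\pi/2-\alpha_{ij})$ leading term against the $\mathcal{O}(h_T^{-1})$ drift and $\mathcal{O}(1)$ reaction adversaries, which is exactly how the paper obtains the $\alpha=1$ and $\alpha=2$ acuteness thresholds. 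Your explicit remark that the diagonal terms vanish because $(v(A(i))-k)_+(v(A(i))-k)_-=0$ at each node is a small but welcome clarification that the paper leaves implicit.
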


\subsection{An Edge-Based Approach}

\begin{figure}[h]
\setlength{\unitlength}{0.14in} % selecting unit length
\centering % used for centering Figure
\begin{picture}(32,15) % picture environment with the size (dimensions)
% 32 length units wide, and 15 units high.
%\put(3,4){\framebox(6,3){$H_{B}(q)$}}
%\put(0,5.5){\vector(1,0){3}}\put(9,5.5){\vector(1,0){4}}
\put(8,7){\line(1,1){6}} \put(14,13){\line(3,-1){12.45}}
\put(8,7){\line(3,-2){9}} \put(17,1){\line(6,5){9.5}}
\put(14,13){\line(1,-4){3}} \put(12.3,7) {$T_1$} \put(18.3,7)
{$T_2$} \put(17,0) {$A$} \put(13.5,13.5) {$B$} \put(7,6.5) {$C$}
\put(8.6,6.8) {$\alpha$} \put(26.8,8.5) {$D$}
\qbezier(9.2,8.2)(10.0,7.2)(9.4,6.1)%\put(8,7){\arc{3}{-0.8}{0.6}}
\qbezier(24.5,9.5)(24.0,8.4)(24.7,7.5)%\put(26.5,8.5){\arc{3}{2.7415926}{3.8}}
\put(25,8.3){$\beta$}
\end{picture}
\caption{An interior edge shared by two elements $T_1$ and $T_2$.} % title of the Figure
 % label to refer figure in text
\label{fig-int}
\end{figure}
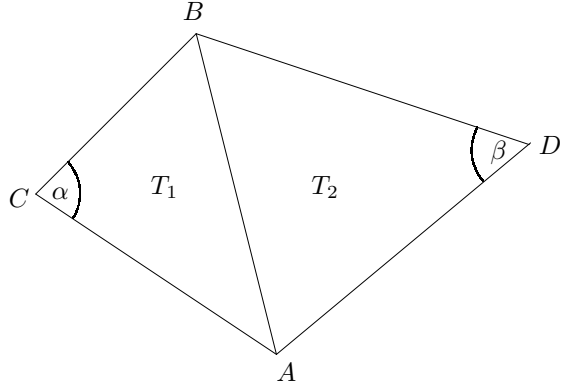

By an edge-wise approach, we mean a representation of the form
$\mathfrak{Q}(u_h;(u_h-k)_-,(u_h-k)_+)$ as integrals over macro-elements that share a common edge.
To verify the assumption (\ref{remark.01new}), we shall explore conditions that make each integral on macro-elements be
non-negative. To this end, we use the notation $\varphi=(u_h-k)_+$ and
$\psi=(u_h-k)_-$ to arrive at
\begin{eqnarray*}
&&\mathfrak{Q}(u_h;(u_h-k)_-,(u_h-k)_+) = \mathfrak{Q}(u_h;\psi,\varphi) \\
=&&\sum_{T\in\T_h}\sum_{i,j=1}^{d+1} \psi(A(i))\
\varphi(A(j)) \int_T \left\{ a \nabla\ell_i \cdot \nabla\ell_j +
\bbb\cdot(\nabla\ell_i) \ell_j + c \ell_i\ell_j \right\}dx\\
=&&\sum_{e_{mn}\in \mathcal{E}_h^0} \psi(A_m)\
\varphi(A_n)\sum_{s=1}^2\int_{T_s}\left\{a \nabla\ell_m^{(s)}
\cdot \nabla\ell_n^{(s)} + \bbb\cdot(\nabla\ell_m^{(s)})
\ell_n^{(s)} + c \ell_m^{(s)}\ell_n^{(s)}\right\}dx,
\end{eqnarray*}
where $\mathcal{E}_h^0$ denotes the set of all interior edges, $A_m$
and $A_n$ are two end points of the edge $e_{mn}$, $T_1$ and $T_2$
share $e_{mn}$ as a common edge. In Fig. \ref{fig-int}, one may
identify $A_m$ with $A$, and $A_n$ with $B$. Here $\ell_m^{(s)}$ is
the shape function on the element $T_s$ associated with the vertex
point $A_m$. Thus, the validity of various DMPs can be derived if
the following holds true
\begin{equation}\label{edge-info}
\sum_{s=1}^2\int_{T_s}\left\{a \nabla\ell_m^{(s)} \cdot
\nabla\ell_n^{(s)} + \bbb\cdot(\nabla\ell_m^{(s)}) \ell_n^{(s)} + c
\ell_m^{(s)}\ell_n^{(s)}\right\}dx\le 0.
\end{equation}
In the case of Poisson problem, one has $a\equiv 1,\ \bbb\equiv 0,$
and $c\equiv 0$. Thus, it suffices to have
\begin{equation}\label{edge-info-01}
\sum_{s=1}^2\int_{T_s}\nabla\ell_m^{(s)} \cdot
\nabla\ell_n^{(s)}dx\le 0.
\end{equation}
It was known that (see for example \cite{scott})
$$
\int_{T_1}\nabla\ell_m^{(1)} \cdot \nabla\ell_n^{(1)}dx =
-\frac{\cot(\alpha)}{2},
$$
and
$$
\int_{T_2}\nabla\ell_m^{(2)} \cdot \nabla\ell_n^{(2)}dx =
-\frac{\cot(\beta)}{2}.
$$
It follows that
\begin{eqnarray*}
\sum_{s=1}^2\int_{T_s}\nabla\ell_m^{(s)} \cdot \nabla\ell_n^{(s)}dx
&=& -\frac{\cot(\alpha)}{2} - \frac{\cot(\beta)}{2}\\
&=& -\frac{\sin(\alpha+\beta)}{2\sin\alpha \ \sin\beta},
\end{eqnarray*}
and (\ref{edge-info-01}) holds true if and only if $\alpha+\beta \le
\pi$.

A similar, but more complicated, analysis can be conducted for
tetrahedral elements; this is left to readers with interest and
curiosity on DMPs for Poisson problems in 3D.

\section*{Acknowledgment}
The authors thank Professor Xiu Ye for helpful discussions and proof
reading of the manuscript. The authors also thank the anonymous
referees for suggesting a re-organization of the technical
presentation by making (\ref{hypothesis}) as a general assumption.

\newpage

\end{document}